\documentclass[11pt,a4paper,reqno]{amsart}

%DIMENSIONI della pagina
\voffset=-1.5cm \textheight=23cm \hoffset=-.5cm \textwidth=16cm
\oddsidemargin=1cm \evensidemargin=-.1cm
\footskip=35pt \linespread{1.1}
\parindent=20pt

\usepackage{amsmath,amsthm,amsfonts,graphicx}

\newtheorem{theorem}{Theorem}[section]
\newtheorem{lemma}[theorem]{Lemma}
\newtheorem{corollary}[theorem]{Corollary}
\newtheorem{prop}[theorem]{Proposition}
\newtheorem{remark}[theorem]{Remark}

%% Package ``Teoremi fondamentali'' --- INIZIO %%
\newcounter{mt}
\def\maintheorem#1#2#3{\par \medskip \noindent {\bf Theorem~\mref{#1}}~(#2).~{\it #3}\par}
\def\mref#1{\Alph{#1}}
\def\maintheoremdeclaration#1{\stepcounter{mt}\newcounter{#1}\setcounter{#1}{\arabic{mt}}}
%% Package ``Teoremi fondamentali'' --- FINE %%

%%% Elenco ``Teoremi fondamentali'' di questo articolo
\maintheoremdeclaration{Approxp=1}
%%%%%%%%%%%%%%%%%%%%%%%%%%%%%%%%%%%%%%%%%%%%%%%%%%%%%%

%%%%%% DEFINITIONS FROM ALDO
%\def\XXint#1#2#3{{\setbox0=\hbox{$#1{#2#3}{\int}$} \vcenter{\vspace{-1pt}\hbox{$#2#3$}}\kern-.5\wd0}}
%\def\Xint#1{\mathchoice {\XXint\displaystyle\textstyle{#1}}{\XXint\textstyle\scriptstyle{#1}}{\XXint\scriptstyle\scriptscriptstyle{#1}}{\XXint\scriptscriptstyle\scriptscriptstyle{#1}}\!\int}
%\def\intmed{\hbox{\ }\Xint{\hbox{\vrule height -0pt width 10pt depth 1pt}}}
%\def\klintmed{\Xint{\hbox{\vrule height -0pt width 6pt depth 1pt}}}
%\def\arc#1{\wideparen{#1}}
%\def\step#1#2{\par\noindent{\underline{\it Step~#1.}}\emph{ #2}\\}
%\def\case#1#2{\par\noindent{\underline{\it Case~#1.}}\emph{ #2}\\}
%\def\bal{\begin{aligned}}
%\def\eal{\end{aligned}}
%\def\angle#1#2#3{#1\widehat{#2}#3}
%\def\u#1{\hbox{\boldmath $#1$}}
%\def\matx{\bigg(\begin{matrix}1\ 0\\ 0\ 0 \end{matrix}\bigg)}

%%%%%%
%\def\A{\mathcal{A}}
%\def\B{\mathcal{B}}
%\def\G{\mathcal{G}}
%\def\C{\mathcal{C}}
\def\eps{\varepsilon}

\def\R{\mathbb R}

\title[On the bi-Sobolev planar homeomorphisms and their approximation]
{On the bi-Sobolev planar homeomorphisms\\and their approximation}
\author{Aldo Pratelli}
\address{Department of Mathematics, University of Erlangen, Cauerstrasse 11, 90158 Erlangen, Germany}
\email{\tt pratelli@math.fau.de}
%\subjclass[2000]{46E35}
%\keywords{Mapping of finite distortion, approximation}

\begin{document}

\begin{abstract}
The first goal of this paper is to give a short description of the planar bi-Sobolev homeomorphisms, providing simple and self-contained proofs for some already known properties. In particular, for any such homeomorphism $u:\Omega\to \Delta$, one has $Du(x)=0$ for almost every point $x$ for which $J_u(x)=0$. As a consequence, one can prove that
\begin{equation}\tag{\ref{1.1}}
\int_\Omega |Du| = \int_\Delta |Du^{-1}|\,.
\end{equation}
Notice that this estimate holds trivially if one is allowed to use the change of variables formula, but this is not always the case for a bi-Sobolev homeomorphism.\par
As a corollary of our construction, we will show that any $W^{1,1}$ homeomorphism $u$ with $W^{1,1}$ inverse can be approximated with smooth diffeomorphisms (or piecewise affine homeomorphisms) $u_n$ in such a way that $u_n$ converges to $u$ in $W^{1,1}$ and, at the same time, $u_n^{-1}$ converges to $u^{-1}$ in $W^{1,1}$. This positively answers an open conjecture (see for instance~\cite[Question~4]{IwKovOnn}) for the case $p=1$.
\end{abstract}

\maketitle

\section{Introduction}

In this paper, we are interested in bi-Sobolev planar homeomorphisms, that are homeomorphisms $u\in W^{1,1}(\Omega)$ such that $u^{-1}\in W^{1,1}(\Delta)$, where we denote for brevity $\Delta=u(\Omega)$. Since for any $2\times 2$ invertible matrix $M$ one has the trivial equality $|M| = |\det M| |M^{-1}|$, if the change of variables formula holds then we can formally calculate
\[
\int_\Delta |Du^{-1}(y)|\, dy = \int_\Omega |Du^{-1}(u(x))| |J_u(x)|\, dx =
\int_\Omega |Du(x)^{-1}| |J_u(x)|\, dx = \int_\Omega |Du(x)|\, dx\,.
\]
There are two problems in the above calculation. First of all, it is well known that the change of variables formula is true only if the ``Lusin $N$ property'' holds for $u$, which means that $u$ maps null sets in null sets; otherwise, in general only an inequality holds (see for instance~\cite[Theorem~A.13]{HKLectureNotes}, or Lemma~\ref{cov} below). And several examples show that the Lusin $N$ property may fail for bi-Sobolev homeomorphisms; some of these examples can be found for instance in~\cite{Pon,DHS,HKLectureNotes}. A second problem is that the formal calculation above requires the existence of $Du^{-1}$ at the point $u(x)$; since $u^{-1}$ is in $W^{1,1}$, then we know that $Du^{-1}$ exists at almost every point $y$, however the points $x$ such that $u(x)$ belongs to a negligible set in $\Delta$ need not to be negligible in $\Omega$, even for bi-Sobolev homeomorphisms. Requiring that $u^{-1}$ maps negligible sets of $\Delta$ in negligible sets of $\Omega$ is the so-called ``Lusin $N^{-1}$ property''. Summarizing, the validity of formula~(\ref{1.1}) can be immediately obtained with the calculations above only if $u$ satisfies both the $N$ and the $N^{-1}$ Lusin properties. The general validity of the formula for bi-Sobolev homeomorphisms is still true, but the proof is more involved. One of the main ingredients to show it, is an interesting property of bi-Sobolev homeomorphisms, namely, that for almost every $x$ for which $J_u(x)=0$, one has actually $Du(x)=0$. This technical fact, which might seem surprising at first sight, plays a key role in the study of bi-Sobolev homeomorphisms; roughly speaking, the main reason is that often the points for which $J_u$ is zero while $Du$ is non-zero give heavy troubles in the constructions. In the last years, an extremely fine work on bi-Sobolev maps has been done by several authors, among which Sbordone and Hencl. Together with their coauthors, they could prove all the main properties of these maps, and in particular the two described above.\par

In the last period, bi-Sobolev maps have then become of big interest also because of their important role in the non-linear elasticity, which we describe quickly here (for more precise and better explanations, one can see for instance~\cite{Ball2,IwKovOnnold,IwKovOnn,HP}). In the framework of nonlinear elasticity, it is extremely important to approximate a Sobolev (or bi-Sobolev) homeomorphism with piecewise affine homeomorphisms, or diffeomorphisms. By ``piecewise affine homeomorphism'' we always mean a homeomorphism which is affine on each triangle of a locally finite triangulation of $\Omega$; one can also try to get a globally finite triangulation whenever this makes sense (for instance, this is of course impossible if $\Omega$ is not a polygon!). The result which is needed is basically the following one.\\
{\bf Question~1.} Let $u\in W^{1,p}(\Omega)$ be a homeomorphism, for $p\geq 1$. Is it true that there exists a sequence $u_n$ of piecewise affine homeomorphisms, or of diffeomorphisms, such that $u_n$ converges to $u$ in $W^{1,p}$?\\
{\bf Question~2.} Let $u\in W^{1,p}(\Omega)$ be a homeomorphism such that $u^{-1}\in W^{1,p}(u(\Omega))$, for $p\geq 1$. Is it true that there exists a sequence $u_n$ of piecewise affine homeomorphisms, or of diffeomorphisms, such that $u_n$ converges to $u$ in $W^{1,p}$ and, at the same time, $u_n^{-1}$ converges to $u^{-1}$ in $W^{1,p}$?\\
The above questions are not new at all, as far as we know they were first set as open questions by J. Ball and L. Evans. For instance, in~\cite{Ball2} Question~1 for the three-dimensional case was asked. In the last years, many attempts to answer to these questions have been done, all for the two-dimensional case. In particular, few years ago Iwaniec, Kovalev and Onninen positively answered to Question~1 in the case when $p>1$ in~\cite{IwKovOnn} (see also~\cite{IwKovOnnold}), while the case $p=1$ of Question~1 was very recently solved in~\cite{HP}. Unfortunately, for a broad application in the framework of nonlinear elasticity a positive answer to Question~1 is not enough, because the $W^{1,p}$ convergence of $u_n$ to $u$ is not even enough to ensure that the elastic energy of $u_n$ is similar to the one of $u$. Indeed, depending on the applications, the elastic energy is always something of the form $\int_\Omega W(Du)$, where the functional $W$ explodes when $\det(Du)$ is very small (see for instance~\cite{Ball2}). This is basically the reason why Question~2 naturally arises: for instance, this was set as an open question in the paper~\cite{IwKovOnn}. Up to now, the only partial answer to Question~2 was found in~\cite{DP}: there, the authors give the affirmative answer, but only under the very strong assumption that $u$ is a bi-Lipschitz function. The main result of this paper is the positive answer to Question~2 for the case $p=1$.

\maintheorem{Approxp=1}{Approximation of bi-Sobolev homeomorphisms with $p=1$.}{Let $\Omega\subseteq \R^2$ be an open set, and let $u\in W^{1,1}(\Omega)$ be a homeomorphism such that $u^{-1}\in W^{1,1}(u(\Omega))$. Then, for every $\eta>0$ there exists a diffeomorphism, as well as a piecewise affine homeomorphism, $u_\eta$ on $\Omega$, with the property that
\begin{equation}\label{estimate}
\|u-u_\eta\|_{L^\infty}+\|u^{-1}-u_\eta^{-1}\|_{L^\infty} + \|Du-Du_\eta\|_{L^1}+\|Du^{-1}-Du_\eta^{-1}\|_{L^1} \leq \eta\,.
\end{equation}
Moreover, we have that $u_\eta(\Omega)=u(\Omega)$ and, as soon as $u$ is continuous up to the boundary, then so is $u_\eta$ and $u_\eta=u$ on $\partial\Omega$. Finally, the triangulation corresponding to the piecewise affine $u_\eta$ can be chosen to be finite, instead of locally finite, as soon as $\Omega$ is a polygon and $u$ is piecewise linear on $\partial \Omega$.}\bigskip

In view of this application in the nonlinear elasticity, many of the important general properties of bi-Sobolev maps are then useful, but they are often regarded from a quite different point of view. In particular, some results which are extremely important for this application are somehow less important in the general study of bi-Sobolev maps, and vice versa. For instance, the validity of~(\ref{1.1}), which is fundamental to obtain Theorem~\mref{Approxp=1}, can be found in the literature only without a proof, since it can be deduced from other results.\par

Because of this different view on the matter, we have decided to give here a short and self-contained description of those aspects of bi-Sobolev homeomorphisms which are important for our construction. We will also give a proof of the main results that we will need to use. In fact, even if they are already known, as said above they are a bit marginal in the biggest framework of the bi-Sobolev maps, so for the interested reader it is simpler to find all what he needs in few pages here, instead of having to check in several different papers. Moreover, our proofs have a quite different flavour than the ``classical'' ones, and they contain the same basic ideas that will be then needed in the proof of our main result.\par

The plan of the paper is very simple. First of all, in Section~\ref{prelim} we will present some preliminaries, mainly known results from recent papers which will be used in our construction. Then, Section~\ref{A} is devoted to show the main properties of bi-Sobolev homeomorphisms which we will need later, and Section~\ref{B} contains the proof of Theorem~\mref{Approxp=1}.

\section{Preliminaries and known facts\label{prelim}}

In this section we present some known facts about bi-Sobolev homeomorphisms, and about the general approximation problem. Through the paper, $u$ will always denote a bi-Sobolev homeomorphism between two open sets $\Omega,\, \Delta\subseteq \R^2$. For any point $x\in\R^2$, we will denote by $Q_r(x)$ the square of side $r$ centered at $x$, with sides parallel to the coordinate axes, and as usual $J_u(x)=\det(Du(x))$ will be the Jacobian of $u$ at $x$. First of all, let us present the general version of the change of variables formula, for the special case of homeomorphisms: this is now a well-established result, see for instance~\cite[Theorem~A.13]{HKLectureNotes}
\begin{lemma}[Change of variables formula]\label{cov}
Let $f\in W^{1,1}_{{\rm loc}}(\Omega,\Delta)$ be a homeomorphism, and let $\varphi:\Omega\to\R^+$ be a measurable function. Then the inequality
\begin{equation}\label{changeofvariables}
\int_\Omega \varphi(f(x)) |J_f(x)|\, dx \leq \int_\Delta \varphi(y)\,dy
\end{equation}
holds, with equality if $f$ satisfies the Lusin $N$ property.
\end{lemma}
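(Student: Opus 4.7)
The plan is to exploit the fact that a homeomorphism has multiplicity one to reduce the statement to the classical Lipschitz area formula on suitable pieces of $\Omega$. First, by monotone convergence and density of simple functions among nonnegative measurable functions, it is enough to establish both the inequality and the equality statement for $\varphi=\chi_E$ with $E\subseteq\Delta$ an arbitrary Borel set.

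The central tool is the Lusin-type decomposition for Sobolev maps: since $f\in W^{1,1}_{\mathrm{loc}}(\Omega,\Delta)$ is approximately differentiable at almost every point, one can partition $\Omega$ as $N\cup\bigsqcup_{k=1}^\infty A_k$ with $|N|=0$, each $A_k$ a Borel set, and $f|_{A_k}$ equal to the restriction of some Lipschitz map whose classical differential coincides with $Df$ almost everywhere on $A_k$. Because $f$ is globally injective, the sets $f(A_k)$ are pairwise disjoint and the multiplicity of $f|_{A_k}$ is the characteristic function of $f(A_k)$, so the classical Lipschitz area formula yields
\begin{equation*}
\int_{A_k}\chi_E(f(x))\,|J_f(x)|\,dx = \int_{f(A_k)}\chi_E(y)\,dy = |E\cap f(A_k)|.
\end{equation*}
Summing over $k$,
\begin{equation*}
\int_\Omega\chi_E(f(x))\,|J_f(x)|\,dx = \Bigl|E\cap \bigcup_{k=1}^\infty f(A_k)\Bigr| \leq |E|,
\end{equation*}
which is precisely \eqref{changeofvariables}.

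For the equality under the Lusin $N$ property, the injectivity of $f$ gives $\Delta\setminus\bigcup_k f(A_k)=f(N)$, and the Lusin $N$ property forces $|f(N)|=0$; thus the inclusion above becomes an equality of measures, yielding equality in \eqref{changeofvariables} for every Borel $E$, and hence for every $\varphi\geq 0$. The only nontrivial ingredient is the Lusin-type decomposition in the second step; for $W^{1,1}$ maps this sits at the borderline of the standard Sobolev--Lipschitz approximation theory, and its proof combines approximate differentiability (available for every Sobolev map), Egorov's theorem to make the remainder in the linearization uniform on compact subsets, and a Whitney-type extension to produce the Lipschitz companion on each piece. All the remaining assembly, and in particular the use of injectivity to collapse the multiplicity to one, is elementary.
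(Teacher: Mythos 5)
The paper does not prove Lemma~\ref{cov}; it simply cites it as well established, pointing to~\cite[Theorem~A.13]{HKLectureNotes}. So there is no in-paper argument to compare against. Your proof is correct and is, up to cosmetic variation, the standard argument that the cited reference also uses: reduce to $\varphi=\chi_E$ by monotone convergence; invoke the Federer--Lusin decomposition $\Omega=N\cup\bigsqcup_k A_k$ with $|N|=0$, $f|_{A_k}$ Lipschitz, and the classical differential of the Lipschitz companion agreeing with $Df$ a.e.\ on $A_k$ (valid because $W^{1,1}_{\rm loc}$ maps are approximately differentiable a.e.); apply the Lipschitz area formula on each piece, using the global injectivity of $f$ to collapse the Banach indicatrix to $\chi_{f(A_k)}$; sum, using that the images $f(A_k)$ are pairwise disjoint; and note that equality is exactly the statement that $\bigcup_k f(A_k)$ is conull in $\Delta$, which under the Lusin $N$ property follows from $|f(N)|=0$. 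Two small remarks. First, the statement as printed has $\varphi:\Omega\to\R^+$, which is a typo for $\varphi:\Delta\to\R^+$; your formulation with Borel $E\subseteq\Delta$ silently corrects this. Second, for a merely Lebesgue-measurable $\varphi$ the composition $\varphi\circ f$ need not be measurable; your restriction to Borel $E$ is the right first step, but the subsequent extension to general nonnegative measurable $\varphi$ deserves one more line (replace $\varphi$ by a Borel representative $\tilde\varphi$, and use the already-proved inequality applied to $\chi_Z$ with $Z=\{\varphi\neq\tilde\varphi\}$, a null set, to see that $J_f=0$ a.e.\ on $f^{-1}(Z)$, so the two integrals over $\Omega$ agree).
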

Thanks to this formula, a simple calculation provides a quantitative estimate of how much $u$ is similar to its first-order expansion near a Lebesgue point for $Du$.
\begin{lemma}\label{oldsta}
Let $x\in\Omega$ be a Lebesgue point for $Du$. Then, for every $\eps>0$ there exists $\bar r_1=\bar r_1(x)>0$ such that, for every $r<\bar r_1$ and every $\tilde x \in Q_r(x)$, the following estimates hold, where $v(z)= u(x) + Du(x) (z-x)$ is the first-order expansion of $u$ near $x$,
\begin{align}\label{stimagen}
\| u - v \|_{L^\infty(Q_{3r}(\tilde x))} \leq \eps r\,, && \int_{Q_{3r}(\tilde x)} |Du(z) - Du(x) | \, dz\leq \eps r^2\,.
\end{align}
Moreover, for almost every $x$ such that $J_u(x)\neq 0$, we have also some $\bar r_2=\bar r_2(x)>0$ such that $\bar r_2(x)\leq \bar r_1(x)$ and, for every $r<\bar r_2$ and again every $\tilde x\in Q_r(x)$,
\begin{equation}\label{stimapar}
\int_{u(Q_{3r}(\tilde x))} |Du^{-1}(w) - Du^{-1}(u(x)) | \, dw \leq \eps r^2\,.
\end{equation}
\end{lemma}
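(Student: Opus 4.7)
The plan is to treat the three estimates separately. The Lebesgue-type bound on $|Du-Du(x)|$ is essentially Lebesgue differentiation, and the $Du^{-1}$-integral bound will be reduced, via Lemma~\ref{cov}, to Lebesgue differentiation at $u(x)$; the delicate point is the uniform estimate $\|u-v\|_{L^\infty(Q_{3r}(\tilde x))}\leq\eps r$. For the second bound of~(\ref{stimagen}) I would simply invoke that $x$ is a Lebesgue point of $Du$, so $\rho^{-2}\int_{Q_\rho(x)}|Du-Du(x)|\to 0$, and use the elementary inclusion $Q_{3r}(\tilde x)\subseteq Q_{4r}(x)$ (valid for $\tilde x\in Q_r(x)$) to pick $\bar r_1$ so that this average is at most $\eps/16$ for every $\rho\leq 4\bar r_1$.

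For the uniform estimate I set $F:=u-v$, so that $F(x)=0$, $DF=Du-Du(x)$, and $F$ is continuous since $u$ is a homeomorphism. By the ACL property of Sobolev maps, for a.e.\ $\xi,y\in Q_{4r}(x)$ the L-path $\xi\to (y_1,\xi_2)\to y$ yields
\[
|F(y)-F(\xi)|\leq \int_{\xi_1}^{y_1}|Du(s,\xi_2)-Du(x)|\,ds+\int_{\xi_2}^{y_2}|Du(y_1,t)-Du(x)|\,dt.
\]
I would then average $\xi$ over $Q_{r/2}(x)$ and use Fubini (swapping $\xi_1$ with $s$, and $\xi_2$ with $t$) to bound each piece by $Cr^{-1}\int_{Q_{4r}(x)}|Du-Du(x)|$, producing some $\xi\in Q_{r/2}(x)$ with $|F(y)-F(\xi)|\leq C_0\delta r$, where $\delta:=\sup_{\rho\leq 4r}\rho^{-2}\int_{Q_\rho(x)}|Du-Du(x)|$. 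Iterating this construction at the dyadic scales $r_k=r\,8^{-k}$ gives $\xi^{(k)}\in Q_{r_k/2}(x)$ with $|F(\xi^{(k-1)})-F(\xi^{(k)})|\leq C_0\delta r_k$. Since $\xi^{(k)}\to x$ and $F$ is continuous with $F(x)=0$, telescoping yields $|F(y)|\leq C_1\delta r$, and choosing $\bar r_1$ small enough so that $\delta\leq\eps/C_1$ closes the estimate.

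For the bound~(\ref{stimapar}) I would first note that at almost every $x$ with $J_u(x)\neq 0$, the image $u(x)$ is a Lebesgue point of $Du^{-1}$: if $N\subseteq\Delta$ denotes the null set of non-Lebesgue points of $Du^{-1}$, applying Lemma~\ref{cov} to $\varphi=\chi_N$ gives $\int_\Omega\chi_N(u(x))|J_u(x)|\,dx\leq 0$, hence $J_u=0$ a.e.\ on $u^{-1}(N)$. At such $x$, the first estimate of~(\ref{stimagen}) applied with $\eps=1$ yields $u(Q_{3r}(\tilde x))\subseteq Q_{Cr}(u(x))$ for some $C=C(|Du(x)|)$ once $r$ is small; then Lebesgue differentiation of $Du^{-1}$ at $u(x)$ on the square $Q_{Cr}(u(x))$, with tolerance $\eps/C^2$, controls the desired integral. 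The main obstacle in the whole argument is precisely the uniform estimate in~(\ref{stimagen}): since $W^{1,1}$ functions need not be classically differentiable a.e., one cannot simply invoke pointwise differentiability of $u$, and one really has to combine the ACL representation, the averaging, the dyadic iteration, and the continuity of $u$ to upgrade Lebesgue-point $L^1$ information into a uniform $L^\infty$ bound.
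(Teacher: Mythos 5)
Your treatment of the second estimate in~(\ref{stimagen}) and of~(\ref{stimapar}) is correct and matches the paper's own argument exactly: the former is Lebesgue differentiation, and the latter uses Lemma~\ref{cov} with $\varphi=\chi_N$ (for $N$ the negligible set of non-Lebesgue points of $Du^{-1}$) to show that $u(x)$ is a Lebesgue point of $Du^{-1}$ for a.e.\ $x$ with $J_u(x)\neq 0$, after which one combines the $L^\infty$ bound with Lebesgue differentiation at $u(x)$. Note the paper does not itself prove the $L^\infty$ bound; it refers to~\cite[Lemma~4.2]{DP}.

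However, your argument for the $L^\infty$ bound has a genuine gap in the averaging step. In the L-path estimate
\[
|F(y)-F(\xi)|\leq \int_{\xi_1}^{y_1}|\partial_1 F(s,\xi_2)|\,ds+\int_{\xi_2}^{y_2}|\partial_2 F(y_1,t)|\,dt,
\]
averaging over $\xi\in Q_{r/2}(x)$ does control the first term by $Cr^{-1}\int_{Q_{Cr}(x)}|DF|$, because both $s$ and $\xi_2$ sweep a two-dimensional region. But the second integrand $|\partial_2 F(y_1,t)|$ depends on $\xi$ only through the lower limit of integration: integrating out $\xi_1$ gives a trivial factor, and integrating out $\xi_2$ merely enlarges the $t$-interval, leaving you with the fixed one-dimensional quantity $\int|\partial_2 F(y_1,t)|\,dt$ along the vertical line through $y$. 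The Lebesgue-point property of $Du$ at $x$ controls two-dimensional averages, not line integrals, so this term is not bounded by $Cr^{-1}\int_{Q_{Cr}(x)}|DF|$ as you assert. The dyadic iteration does not repair this, since the problematic vertical segment always terminates at the fixed point $y$ and never shrinks. The standard fix is to also exploit the freedom in $y$: by Fubini a large-measure set of vertical (resp.\ horizontal) lines are ``good'' in the sense that the line integral of $|DF|$ inside $Q_{Cr}(x)$ is $\lesssim\delta r$; for a.e.\ $y$ on a good vertical line and a.e.\ $\xi$ on a good horizontal line the L-path gives $|F(y)-F(\xi)|\lesssim\delta r$, and then the continuity of $F$ together with $F(x)=0$ lets you pass to all $y$ and let $\xi\to x$. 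The ingredients you list (ACL, Fubini, continuity, $F(x)=0$) are the right ones; it is only the specific scheme of averaging over $\xi$ alone that does not close the estimate.
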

\begin{proof}
First of all, observe that it is enough to show the claim for the particular case $\tilde x=x$, since $\tilde x\in Q_r(x)$ implies $Q_r(\tilde x)\subseteq Q_{2r}(x)$.\par

The estimates of~(\ref{stimagen}) are well known, for instance they are proved in~\cite[Lemma~4.2]{DP}. Basically, the second estimate comes directly from the definition of Lebesgue points, and a very simple argument via Fubini Theorem gives then also the first one.\par

Let us then concentrate ourselves on~(\ref{stimapar}). Assume that $x$ is a Lebesgue point for $Du$, and that $J_u(x)\neq 0$; as a consequence, if we call $y=u(x)$ then we have that $Du^{-1}(y)$ exists and coincides with the inverse of $Du(x)$. Suppose for a moment that $y$ is in fact a Lebesgue point for $Du^{-1}$: in this case, the estimate~(\ref{stimapar}) comes again directly from the definition of Lebesgue points, also recalling that $Du^{-1}(y)$ is invertible and using the $L^\infty$ estimate of~(\ref{stimagen}). Therefore, to conclude we only have to show that for almost every $x$ with $J_u(x)\neq 0$ the point $u(x)$ is a Lebesgue point for $Du^{-1}$.\par
To do so, let us define $\Gamma\subseteq \Delta$ the set of points of $\Delta$ which are not Lebesgue points for $Du^{-1}$; keep in mind that we conclude the proof once we show that $J_u(x)=0$ for almost every $x\in u^{-1}(\Gamma)$. We know that $|\Gamma|=0$ since $u^{-1}$ belongs to $W^{1,1}(\Delta)$. We can then apply the change of variables formula~(\ref{changeofvariables}) with $f=u$ and $\varphi=\chi_\Gamma$, and conclude the proof since
\[
\int_{u^{-1}(\Gamma)} |J_u(x)|\, dx =
\int_{\Omega} \varphi(u(x)) |J_u(x)|\, dx
\leq \int_\Delta \varphi(y)\, dy = |\Gamma|=0\,.
\]
\end{proof}

We can observe the following corollary of the above estimate, valid for the case when $J_u(x)\neq 0$. In this case, the $L^\infty$ estimate~(\ref{stimagen}) implies that the images under $u$ of the four vertices of the square $Q=Q_r(\tilde x)$ are very close to the vertices of the parallelogram $v(Q)$. We can then define, on the square $Q$, the ``interpolation'' $u_Q$ as the function which coincides with $u$ on the four vertices of $Q$, and which is affine on the two triangles in which $Q$ is subdivided by a diagonal (it doesn't matter which of the diagonals we choose). A trivial calculation, starting from~(\ref{stimagen}) and~(\ref{stimapar}), gives then the following estimate (almost identical calculations can be found in~\cite{HP,DP}).
\begin{corollary}\label{bafo}
Let $x\in\Omega$ be a Lebesgue point for $Du$ such that $J_u(x)\neq 0$ and the second part of Lemma~\ref{oldsta} holds true for $x$. If $Q$ is a square of side $r$ containing $x$, with $r<\bar r_2(x)$, then the interpolated function $u_Q$ satisfies
\[
\int_Q |Du-Du_Q| + \int_{u_Q(Q)} |Du^{-1} - Du_Q^{-1}| < 5 \eps r^2\,.
\]
\end{corollary}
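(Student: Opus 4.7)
The plan is to reduce everything to affine-interpolation estimates by comparing $u_Q$ to the linearisation $v(z)=u(x)+Du(x)(z-x)$. Letting $\tilde x$ be the centre of $Q$, the hypothesis $x\in Q$ forces $\tilde x\in Q_r(x)$, so Lemma~\ref{oldsta} applies at $\tilde x$ and supplies the bounds~(\ref{stimagen})--(\ref{stimapar}). Because $u_Q$ equals $u$ at each vertex of $Q$ and $v$ is affine, $u_Q-v$ is affine on each half-square triangle and takes values of norm $\leq\eps r$ at each vertex by the $L^\infty$ part of~(\ref{stimagen}); the convex-combination property of affine functions then gives $|u_Q-v|\leq\eps r$ throughout $Q$, and reading off the gradient of the affine interpolant on either of the two triangles (both of fixed shape with edges $r,r,r\sqrt 2$) yields a pointwise bound $|Du_Q-Du(x)|\leq C\eps$, with $C$ an absolute constant.

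The first integral is then handled by the triangle inequality,
\[
\int_Q |Du-Du_Q| \leq \int_Q |Du-Du(x)| + \int_Q |Du(x)-Du_Q| \leq \eps r^2 + C\eps r^2,
\]
using the $L^1$ half of~(\ref{stimagen}) for the first piece and the pointwise bound above for the second. For the second integral I would split analogously,
\[
\int_{u_Q(Q)}\!\!|Du^{-1}-Du_Q^{-1}| \leq \int_{u_Q(Q)}\!\!|Du^{-1}-Du^{-1}(u(x))| + \int_{u_Q(Q)}\!\!|Du^{-1}(u(x))-Du_Q^{-1}|.
\]
For the first summand, one invokes~(\ref{stimapar}) to bound it by $\eps r^2$, provided the inclusion $u_Q(Q)\subseteq u(Q_{3r}(\tilde x))$ has been checked; this comes from the fact that $u$ and $u_Q$ both lie within $\eps r$ of the invertible linear map $v$, leaving a definite amount of room around $v(Q)$ once $r$ is small. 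For the second summand, $Du_Q^{-1}$ is piecewise constant with value $M_i^{-1}$ on $u_Q(T_i)$, where $M_i:=Du_Q|_{T_i}$; the matrix identity $A^{-1}-B^{-1}=A^{-1}(B-A)B^{-1}$ combined with $|M_i-Du(x)|\leq C\eps$ produces $|M_i^{-1}-Du(x)^{-1}|\leq C'\eps$, with $C'$ depending on $|Du(x)^{-1}|$. Multiplying by $|u_Q(T_i)|$, which is comparable to $|J_u(x)|r^2/2$, gives a bound of the form $(\mathrm{const}(x))\cdot\eps r^2$.

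The explicit coefficient $5$ is then cosmetic: every piece above is controlled by $(\mathrm{const}(x))\cdot\eps r^2$, so one secures the claim by applying Lemma~\ref{oldsta} from the outset with a sufficiently small $\eps'=\eps/K(x)$ in place of $\eps$ and correspondingly shrinking $\bar r_2(x)$. I expect the principal technical obstacle to be precisely the inclusion $u_Q(Q)\subseteq u(Q_{3r}(\tilde x))$ needed to invoke~(\ref{stimapar}): since $u_Q$ is defined only on $Q$, its image could a priori leak out of $u(Q_{3r}(\tilde x))$, and the inclusion has to be extracted from the invertibility of $Du(x)$, the $L^\infty$ closeness of both $u$ and $u_Q$ to $v$, and a short invariance-of-domain / open-map argument, possibly at the cost of further restricting $r$.
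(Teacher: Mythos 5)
The paper does not actually write out a proof for this corollary---it is dismissed as ``a trivial calculation, starting from~(\ref{stimagen}) and~(\ref{stimapar})'' with a pointer to similar computations in~\cite{HP,DP}. Your reconstruction follows exactly the intended route: compare $u_Q$ with the linearisation $v$ through the $L^\infty$ bound in~(\ref{stimagen}), extract the pointwise bound $|Du_Q-Du(x)|\le C\eps$ from the affine interpolant, and split each of the two integrals by a triangle inequality centred at $Du(x)$, respectively $Du(x)^{-1}$, so that the remaining pieces are handled by the $L^1$ halves of~(\ref{stimagen}) and~(\ref{stimapar}). That is the same argument one finds in~\cite{HP,DP}, and your version is correct.

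Your two flagged concerns are both genuine and both correctly resolved. First, the inclusion $u_Q(Q)\subseteq u(Q_{3r}(\tilde x))$ is indeed needed to invoke~(\ref{stimapar}); it follows from the $L^\infty$ closeness of both $u$ and $u_Q$ to the invertible affine map $v$, plus the fact that $v(Q_{3r}(\tilde x))\setminus v(Q)$ has thickness of order $r/|Du(x)^{-1}|$, which dominates the perturbation $\eps r$ once $\eps$ is small relative to $|Du(x)^{-1}|$. Second, and more importantly, the pieces $\int_{u_Q(T_i)}|Du(x)^{-1}-M_i^{-1}|$ contribute something of size $|Du(x)|^2|J_u(x)|^{-1}\cdot\eps r^2$, so the overall constant is governed by the pointwise distortion of $Du(x)$ and is not an absolute $5$. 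You are right that this dependence must be absorbed by applying Lemma~\ref{oldsta} with a smaller, $x$-dependent $\eps'$, correspondingly shrinking $\bar r_2(x)$; since $\bar r_2(x)$ is in any case a local, $x$- and $\eps$-dependent radius, this is exactly the reading the paper implicitly uses, and it is also what makes the later uniform bound~(\ref{for_G}) legitimate across the squares of $\Omega_G$. So the statement's ``$5$'' is, as you say, cosmetic, and your proof is the correct expansion of the paper's unwritten one.
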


We can now briefly explain the overall strategy which was used in~\cite{HP} to solve Question~1 in the case $p=1$ (which is actually similar to the strategy previously used in~\cite{DP}): we will use an analogous strategy also in here. For any $r>0$, consider the squares of side $r$ whose centers have both the coordinates integer multiples of $r$: these are pairwise disjoint squares which cover $\R^2$. In particular, we will call ``\emph{$r$-tiling of $\Omega$}'' the set of those squares $Q_r(x)$ such that $Q_{3r}(x)$ is compactly contained in $\Omega$. Assuming for simplicity that $\Omega$ has finite area, the squares of the $r$-tiling are finitely many, and they cover the whole $\Omega$ up to a set whose measure goes to $0$ when $r$ goes to $0$. The idea of the proof is then to consider separately the squares for which the above results apply, which are ``easier'' to treat, and the squares for which this does not hold true, which are ``worse'' but whose total area is arbitrarily small. The case when $\Omega$ has infinite area does not give any serious additional trouble, since it is possible to regard it as the countable union of increasing open sets of finite area, and all the results can be then more or less automatically extended from the finite area case to the general one. We will need to use the following result, which is taken from~\cite[Theorem~2.1 and Section~4]{HP}: in fact, a much more complicated result is proved there, but we claim here only the particular case that we are going to need.
\begin{prop}\label{standa}
Let $\Omega\subseteq\R^2$ be an open set, and let $u\in W^{1,1}(\Omega;\R^2)$ be a homeomorphism. Let $\{ Q_i\}$, for $1\leq i \leq N$, be a finite union of squares of the $r$-tiling of $\Omega$, such that any square $Q_i$ contains at least a Lebesgue point $x$ for $Du$ such that $J_u(x)\neq 0$ and $\bar r_1(x) > r$. Then, there exists a piecewise affine homeomorphism $v\in W^{1,1}(\Omega; \R^2)$ such that:
\begin{enumerate}
\item[(i)] $\|u-v\|_{L^\infty} + \|u^{-1}-v^{-1}\|_{L^\infty} < \eps$;
\item[(ii)] $v(\Omega)=u(\Omega)$ and, if $u$ is continuous up to $\partial\Omega$, then so is $v$ and $v=u$ on $\partial\Omega$;
\item[(iii)] the triangulation of $v$ is finite if $\Omega$ is a polygon and $u$ is piecewise linear on $\partial\Omega$;
\item[(iv)] $v$ coincides with the interpolation $u_{Q_i}$ for every $1\leq i \leq N$;
\item[(v)] the following estimate holds, being $K$ a purely geometric constant,
\[
\int_{\Omega\setminus \cup_{i=1}^N Q_i} |D v| \leq K \int_{\Omega\setminus \cup_{i=1}^N Q_i} |D u| \,.
\]
\end{enumerate}
\end{prop}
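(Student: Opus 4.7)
The plan is as follows. On each good square $Q_i$, I set $v := u_{Q_i}$, the interpolation defined before Corollary~\ref{bafo}: it is affine on each of the two triangles cut out by a diagonal of $Q_i$ and agrees with $u$ at the four vertices. The hypothesis that $Q_i$ contains a Lebesgue point $x$ for $Du$ with $J_u(x)\neq 0$ and $\bar r_1(x)>r$ guarantees, via the $L^\infty$ bound in~(\ref{stimagen}), that the images under $u$ of the four corners of $Q_i$ lie within $\eps r$ of the corresponding vertices of a non-degenerate parallelogram (namely the image of $Q_i$ under the affine approximation of $u$ at $x$). For $\eps$ small this forces $u_{Q_i}$ itself to be a homeomorphism of $Q_i$ onto a near-parallelogram quadrilateral, and gives $\|u-u_{Q_i}\|_{L^\infty(Q_i)} \leq C\eps r$. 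This immediately yields (iv) and the interior piece of the $L^\infty$ estimate in (i).

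The real work is to extend $v$ to the complementary region $R := \Omega\setminus\bigcup_i Q_i$ so as to achieve (i), (ii), (iii) and (v) simultaneously. The prescribed boundary datum on $R$ is $v=u_{Q_i}$ on each interior boundary $\partial Q_i$ (which is already piecewise linear) together with $v=u$ on $\partial\Omega$. Except when (iii) applies and the boundary datum is already piecewise linear, I would first approximate $u|_{\partial\Omega}$ uniformly by a piecewise linear map, using the uniform continuity of $u$ on compact subsets of $\overline\Omega$; any thin sliver between $v(\Omega)$ and $u(\Omega)$ created by this step can be absorbed into the $\eps$ budget, or eliminated at the end by a reparametrisation so as to reach the exact equality $v(\Omega)=u(\Omega)$ required by (ii).

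The heart of the argument is then a purely planar extension statement for multiply-connected polygonal regions: given $R$ and a piecewise linear homeomorphism $g\colon \partial R\to \partial R'$ that extends to \emph{some} $W^{1,1}$ homeomorphism $R\to R'$ (namely $u$, after the boundary correction), produce a piecewise affine homeomorphism $v\colon R\to R'$ satisfying $\int_R |Dv| \leq K \int_R |Du|$ for a purely geometric $K$. This is precisely what I would cite from~\cite[Theorem~2.1]{HP}, and it is by far the main obstacle: a generic Schoenflies-type filling would yield a homeomorphism with no $W^{1,1}$ control whatsoever. The construction in~\cite{HP} proceeds by cutting $R$ along thin corridors whose $u$-images have controlled diameter, thereby reducing to a finite collection of simply connected pieces, and then filling each piece with a piecewise affine map built on a triangulation adapted to $u$ so that on each small triangle $|Dv|$ is dominated by an average of $|Du|$ on a nearby region, which integrates to (v). Since the triangulation can be refined arbitrarily, the full $L^\infty$ closeness of $v$ to $u$ in (i) comes for free, and the corresponding closeness of the inverses follows from this together with the uniform continuity of $u^{-1}$ on compact subsets of $u(\Omega)$.
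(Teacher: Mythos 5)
The paper does not actually prove this proposition; it states it and cites~\cite[Theorem~2.1 and Section~4]{HP}, remarking that only the special case needed here is being quoted from a more complicated result proved there. Your proposal does the same thing — you interpolate on the good squares and hand off the extension to $\Omega\setminus\bigcup_i Q_i$, with its crucial $L^1$ control (v), to the machinery of~\cite{HP} — so your route coincides with the paper's.
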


We conclude this section with a couple of comments about the last result. First of all, notice carefully that the function $v$ coincides with the interpolation $u_Q$ only for \emph{some} of the squares $Q$ of the $r$-tiling, not for all them: when using the above proposition, the ``right'' choice of which squares of the tiling have to be taken is also important. Second, the constant $\bar r_1(x)$ mentioned in the proposition is the same as in Lemma~\ref{oldsta}: in fact, the assumption $u^{-1}\in W^{1,1}$ is needed there only to get the constant $\bar r_2(x)$, while the constant $\bar r_1(x)$ only needs $u\in W^{1,1}$. Finally, let us comment about the possibility of having a finite triangulation, instead of just a locally finite one: this is of course nicer but, since we want the function $v$ to be defined in $\Omega$ and to coincide with $u$ on $\partial\Omega$, this is also impossible unless $\Omega$ is a polygon and $u$ is already piecewise linear on $\partial\Omega$. As a consequence, the additional assumptions in point~(iii) of the above proposition are sharp.

\section{Properties of bi-Sobolev homeomorphisms\label{A}}

In this section we analyse some properties of bi-Sobolev homeomorphisms, and in particular we prove the validity of~(\ref{1.1}). As said in the introduction, most of the results of this section are already known, but we prefer to give also their proofs, because they are simple and the constructions are similar to those that we will need to get Theorem~\mref{Approxp=1}; moreover, in this way this paper remains self-contained. Let then $u:\Omega\to \Delta$ be a bi-Sobolev homeomorphism. We start considering the situation around a Lebesgue point $x$ for $Du$ such that $J_u(x)=0$ but $Du(x)\neq 0$.

\begin{lemma}\label{lucio}
Let $x\in\Omega$ be a Lebesgue point for $Du$ such that $J_u(x)=0 < |Du(x)|$. Then, for any $\eps>0$ there exists $\bar r_3=\bar r_3(x)>0$ such that, for any $r<\bar r_3$ and any $\tilde x\in Q_r(x)$,
\begin{align}\label{flightrome}
\big|u(Q_r(\tilde x))\big| < \eps \int_{u(Q_r(\tilde x))} |Du^{-1}|\,, && 
\int_{u(Q_r(\tilde x))} |Du^{-1}| \geq (1-\eps) \int_{Q_r(\tilde x)} |Du|\,.
\end{align}
\end{lemma}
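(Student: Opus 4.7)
The hypothesis $J_u(x)=0<|Du(x)|$ says that $A:=Du(x)$ has rank exactly one, so I can write $A=\xi\otimes w$ with $|w|=1$ and $a:=|\xi|=|Du(x)|>0$. Let $v(z)=u(x)+A(z-x)$ as in Lemma~\ref{oldsta}. Fix a small auxiliary parameter $\eps_1>0$, depending on $\eps$ and $a$, to be chosen at the very end of the argument, and apply Lemma~\ref{oldsta} with this $\eps_1$ to produce $\bar r_1(x)>0$; set $\bar r_3:=\bar r_1$. Throughout, fix $r<\bar r_3$, $\tilde x\in Q_r(x)$ and write $Q:=Q_r(\tilde x)$, so that both estimates of~(\ref{stimagen}) are available.

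The first ingredient is an upper bound on $|u(Q)|$. Since $A$ has rank one, $v(Q)$ is a segment along $\xi$, of length at most $ar\sqrt{2}$. The $L^\infty$ part of~(\ref{stimagen}) places $u(Q)$ inside the $\eps_1 r$-tubular neighborhood of this segment, whose area is at most $C_1\eps_1 a r^2$ for a purely geometric constant $C_1$ (provided, say, $\eps_1\le a$). This is already the ``numerator'' of the first estimate in~(\ref{flightrome}).

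The heart of the argument is a matching lower bound on $\int_{u(Q)}|Du^{-1}|$, obtained via the coarea formula applied to each component $u^{-1}_i\in W^{1,1}(\Delta)$. The pointwise identity $u^{-1}_i\circ u=z_i$, which holds because $u$ is a homeomorphism, gives
\[
u(Q)\cap\{u^{-1}_i=t\}=u\bigl(\{z\in Q:z_i=t\}\bigr),
\]
the $u$-image of an axis-aligned segment, hence a simple arc whose $\mathcal H^1$-measure is at least the Euclidean distance between its endpoints. Combining with the $L^\infty$ estimate, these endpoints are displaced by at least $ar|w_1|-2\eps_1 r$ for segments $\{z_2=t\}$, and $ar|w_2|-2\eps_1 r$ for segments $\{z_1=t\}$. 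Integrating in $t$ over an interval of length $r$ and invoking coarea yields
\[
\int_{u(Q)}|\nabla u^{-1}_2|\ge r\,\bigl(ar|w_1|-2\eps_1 r\bigr)_+\,, \qquad \int_{u(Q)}|\nabla u^{-1}_1|\ge r\,\bigl(ar|w_2|-2\eps_1 r\bigr)_+\,.
\]
Using the pointwise bound $|Du^{-1}|\ge |w_2|\,|\nabla u^{-1}_1|+|w_1|\,|\nabla u^{-1}_2|$, which follows from Cauchy--Schwarz in view of $w_1^2+w_2^2=1$ and $|Du^{-1}|^2=|\nabla u^{-1}_1|^2+|\nabla u^{-1}_2|^2$, integration gives
\[
\int_{u(Q)}|Du^{-1}|\ge ar^2(w_1^2+w_2^2)-C_2\eps_1 r^2=ar^2-C_2\eps_1 r^2
\]
for a universal constant $C_2$.

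To conclude I would combine this with the trivial upper bound $\int_Q|Du|\le (a+\eps_1)r^2$ coming from the $L^1$ part of~(\ref{stimagen}) and the triangle inequality. Choosing $\eps_1$ small enough, depending only on $\eps$ and $a$, so that simultaneously $ar^2-C_2\eps_1 r^2\ge(1-\eps)(a+\eps_1)r^2$ and $C_1\eps_1 a\le\eps(a-C_2\eps_1)$, both inequalities in~(\ref{flightrome}) follow at once. The main obstacle is the third paragraph: one must identify the analytic level sets of the Sobolev functions $u^{-1}_i$ with the pointwise $u$-images of axis-aligned segments, which is what makes the coarea bound geometric; continuity of $u^{-1}$ (since $u$ is a homeomorphism) is exactly what removes the usual null-set ambiguity. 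The minor subtlety that one of $|w_1|,|w_2|$ may vanish is absorbed by the positive-part clipping, because then the complementary direction carries essentially the full length $ar$.
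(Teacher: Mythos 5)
Your overall strategy matches the paper's: reduce to the rank-one situation, upper-bound $|u(Q)|$ and $\int_Q|Du|$ via~(\ref{stimagen}), and concentrate the work on a lower bound for $\int_{u(Q)}|Du^{-1}|$. The gap is precisely in that last step. You invoke a coarea formula for the $W^{1,1}$ functions $u^{-1}_i$ in the form $\int_{u(Q)}|\nabla u^{-1}_i| = \int_\R \mathcal H^1\bigl(u(Q)\cap\{u^{-1}_i=t\}\bigr)\,dt$ and then bound $\mathcal H^1$ of each level set from below by the distance between its endpoints. But for a scalar $W^{1,1}$ (hence $BV$) function the available coarea formula is the Fleming--Rishel one, $\int|\nabla f|=\int_\R P(\{f>t\})\,dt$, and since $f$ is continuous one only knows $\partial^*\{f>t\}\subseteq f^{-1}(t)$, which yields $\int|\nabla f|\leq\int_\R\mathcal H^1\bigl(f^{-1}(t)\bigr)\,dt$: the inequality goes the \emph{wrong} way for your purpose. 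The stronger form with $\mathcal H^{n-1}$ of pointwise level sets requires more integrability and is not available under the bare $W^{1,1}$ hypothesis; in fact the arcs $u(\{z_i=t\}\cap Q)$ need not even have finite $\mathcal H^1$-measure, so the right-hand side of your proposed identity can be infinite while the left-hand side is finite. To repair your route you would need a direct lower bound on $P(\{u^{-1}_i>t\};u(Q))$ by the distance between the endpoints, and that is a genuine topological fact your proposal never supplies.

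The paper obtains the lower bound by Fubini and the ACL property of $u^{-1}$, with the topology made explicit. After rotating so that $Du(x)=\bigl(\begin{smallmatrix}L&0\\0&0\end{smallmatrix}\bigr)$, for each $t$ with $\delta r<t<(L-\delta)r$ the vertical line $\{w_1=t\}$ avoids the images of the left and right sides of $\partial Q$ by the $L^\infty$ estimate; since $u(\partial Q)$ is a Jordan curve and that line separates $u(\text{left side})$ from $u(\text{right side})$, its trace in $\overline{u(Q)}$ contains a connected subinterval joining $u(\text{top side})$ to $u(\text{bottom side})$. On that subinterval $u^{-1}_2$ runs from $\tilde x_2+r/2$ to $\tilde x_2-r/2$, so the one-dimensional restriction yields $\int_{u(Q)\cap\{w_1=t\}}|D_2 u^{-1}|\geq r$, and integrating in $t$ gives $\int_{u(Q)}|Du^{-1}|\geq(L-2\delta)r^2$. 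This uses only that $u^{-1}\in W^{1,1}$ is absolutely continuous on almost every line, not any coarea theorem. If you prefer to keep your level-set point of view, you would still need an equivalent separation argument to bound the perimeter of the superlevel sets from below, so the coarea language saves no work.
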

\begin{proof}
Up to a rotation, we can assume that
\[
Du(x) = \left(\begin{matrix} L & 0\\ 0 & 0 \end{matrix}\right)
\]
for some $L>0$. Let now $\delta= \delta(\eps,L)$ be a small constant, to be specified later, and let us apply the first part of Lemma~\ref{oldsta} with the constant $\delta$: we find a constant $\bar r_3=\bar r_3(x)$ such that for any $r<\bar r_3$ the uniform estimate
\begin{equation}\label{infest}
\| u -v \|_{L^\infty(Q_r(\tilde x))} \leq \delta r
\end{equation}
holds. In particular, $u(Q_r(\tilde x))$ is very close to $v(Q_r(\tilde x))$, which is an horizontal segment of length $Lr$; to fix the ideas, assume $v(Q_r(\tilde x))= \big\{w\equiv (w_1,0)\in\R^2:\, 0\leq w_1 \leq Lr \big\}$. Figure~\ref{Fig:Jf=0} depicts the situation.\par
\begin{figure}[thbp]
\input{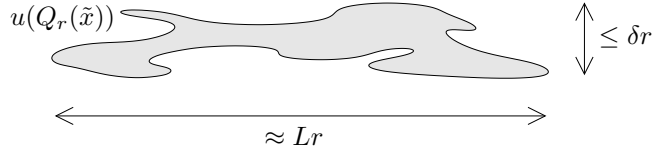}
\caption{The set $u(Q_r(x))$ in Lemma~\ref{lucio}.}\label{Fig:Jf=0}
\end{figure}
Let us now fix $\delta r < t < (L-\delta) r$, and let $w=(t,w_2)$ be a point in the boundary of $u(Q_r(\tilde x))$. Since $u$ is a homeomorphism, this means that $w=u(z)$ for some $z\in \partial Q_r(\tilde x)$, and the $L^\infty$ estimate~(\ref{infest}) ensures that $z$ cannot belong to the left nor the right side of $\partial Q_r(\tilde x)$. We can say even something more precise: since the homeomorphism $u$ transforms the closed curve $\partial Q_r(\tilde x)$ in the closed curve $u(\partial Q_r(\tilde x))$, the boundary of $u(Q_r(\tilde x))$ must contain at least some point $w=(t,w_2)=u(z)$ such that $z$ belongs to the upper side of $\partial Q_r(\tilde x)$, and also some other point $w'=(t,w_2')=u(z')$ such that $z'$ belongs to the lower side of $\partial Q_r(\tilde x)$. As an immediate consequence, we get
\[
\int_{u(Q_r(\tilde x)) \cap \{ w:\, w_1=t\}} |Du^{-1}(w)|\, dw \geq \int_{u(Q_r(\tilde x)) \cap \{ w:\, w_1=t\}} |D_2 u^{-1}(w)|\, dw \geq r.
\]
Since this estimate holds for every $\delta r < t <(L-\delta) r$, we deduce
\[
\int_{u(Q_r(\tilde x))} |Du^{-1}| \geq (L-2\delta) r^2\,.
\]
On the other hand, from the estimate~(\ref{stimagen}) we infer that
\[
\int_{Q_r(\tilde x)} |Du(z)| \, dz \leq 
L r^2 + \int_{Q_r(\tilde x)} |Du(z) - Du(x) | \, dz
\leq (L+\delta) r^2\,.
\]
Finally, the $L^\infty$ estimate~(\ref{infest}) immediately ensures that
\[
\big| u(Q_r(\tilde x))\big| \leq (L+2\delta)\delta r^2\,.
\]
Putting together the last three estimates immediately yields the validity of~(\ref{flightrome}), as soon as $\delta$ is small enough depending only on $\eps$ and on $L$, thus ultimately only on $\eps$ and $x$.
\end{proof}

We can immediately observe that the second estimate in~(\ref{flightrome}) holds true also for points where $J_u(x)\neq 0$: this comes as an immediate corollary of Lemma~\ref{oldsta}.
\begin{lemma}\label{lucio2}
For almost every $x\in \Omega$ such that $J_u(x)\neq 0$, and for any $\eps>0$, there exists $\bar r_3=\bar r_3(x)>0$ with $\bar r_3(x)\leq \bar r_2(x)$ and such that, for any $r<\bar r_3$ and every $\tilde x\in Q_r(x)$,
\begin{equation}\label{flight2}
\int_{u(Q_r(\tilde x))} |Du^{-1}| \geq (1-\eps) \int_{Q_r(\tilde x)} |Du|\,.
\end{equation}
\end{lemma}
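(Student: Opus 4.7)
The plan is to show that both sides of~(\ref{flight2}) are approximately $|Du(x)|\,r^2$, with errors small enough that their ratio exceeds $1-\eps$. Observe first that $J_u(x)\neq 0$ forces $|Du(x)|>0$, so no division by zero will appear in what follows.

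I would apply Lemma~\ref{oldsta} with a small parameter $\delta$ in place of $\eps$, to be chosen at the end in terms of $\eps$ and of $Du(x)$, and set $\bar r_3(x)=\bar r_2(x)$. Writing $Q=Q_r(\tilde x)$ for brevity, the triangle inequality together with~(\ref{stimagen}) immediately gives the upper bound
\[
\int_Q |Du|\,\leq\,\bigl(|Du(x)|+\delta\bigr)\,r^2,
\]
while~(\ref{stimapar}), combined with the fact (already noted in the proof of Lemma~\ref{oldsta}) that $Du^{-1}$ at the point $u(x)$ coincides with $Du(x)^{-1}$, yields
\[
\int_{u(Q)}|Du^{-1}|\,\geq\,|Du(x)^{-1}|\,|u(Q)|-\delta r^2.
\]
Thanks to the elementary identity $|A|=|A^{-1}|\,|\det A|$ for invertible $2\times 2$ matrices applied to $A=Du(x)$, the whole proof then reduces to an area lower bound of the form $|u(Q)|\geq |J_u(x)|\,r^2 - C\delta r^2$, where $C$ depends only on $Du(x)$.

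The main obstacle is precisely this area lower bound. Since $J_u(x)\neq 0$, the affine map $v(z)=u(x)+Du(x)(z-x)$ is a linear bijection and $v(Q)$ is a non-degenerate parallelogram of area exactly $|J_u(x)|\,r^2$. The uniform bound $\|u-v\|_{L^\infty(Q)}\leq \delta r$ from~(\ref{stimagen}) places $u(\partial Q)$ within $\delta r$ of $v(\partial Q)$, and a topological/degree argument---entirely analogous to the boundary analysis carried out in the proof of Lemma~\ref{lucio}---shows that every point of $v(Q)$ whose distance from $v(\partial Q)$ exceeds $\delta r$ must lie in $u(Q)$. The parallelogram $v(Q)$ shrunk inward by $\delta r$ differs in area from $v(Q)$ by at most a constant multiple of $\delta r^2$, the constant depending on the singular values of $Du(x)$; this requires $\delta$ smaller than the minimum singular value of $Du(x)$, and this is exactly where the assumption $J_u(x)\neq 0$ is used in a non-trivial way (contrast with Lemma~\ref{lucio}, where the affine model is degenerate and one is forced to analyse a single fibre instead of an area). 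Choosing $\delta$ small enough in terms of $\eps$ and of $Du(x)$---admissible since $x$ is fixed and the estimate is merely pointwise---then closes the argument.
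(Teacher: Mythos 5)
Your argument follows the paper's proof essentially step for step: apply Lemma~\ref{oldsta} with a small parameter $\delta$, bound $\int_Q|Du|$ from above by $(|Du(x)|+\delta)r^2$, bound $\int_{u(Q)}|Du^{-1}|$ from below by $|Du(x)^{-1}|\,|u(Q)|-\delta r^2$, reduce to the area inequality $|u(Q)|\geq|J_u(x)|r^2-C\delta r^2$ via $|A|=|A^{-1}||\det A|$, and obtain that area bound from the $L^\infty$ closeness of $u$ to its affine model. The only difference is that you spell out the degree-theoretic justification for the area lower bound, which the paper states without elaboration (writing $|u(Q_r(\tilde x))|\geq(|\det M|-2\delta|{\rm tr}\,M|)r^2$ directly); this is a welcome addition of detail, not a different route.
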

\begin{proof}
Let $x$ be a point for which the second part of Lemma~\ref{oldsta} applies, hence almost every $x$ with $J_u(x)\neq 0$. Let us call for brevity $M$ the matrix $Du(x)$, and let $\delta=\delta(\eps,M)$ be a small constant to be specified later: Lemma~\ref{oldsta} provides us with a constant $\bar r_3(x)>0$ such that the estimates~(\ref{stimagen}) and~(\ref{stimapar}) hold with $\delta$ in place of $\eps$ whenever $r<\bar r_3$. As a consequence, calling $y=u(x)$, and calling ${\rm tr}\, M$ the trace of the matrix $M$, we can calculate
\[\begin{split}
\int_{u(Q_r(\tilde x))} |Du^{-1}(w)|\, dw & \geq
\int_{u(Q_r(\tilde x))} |Du^{-1}(y)|\, dw - \int_{u(Q_r(\tilde x))} |Du^{-1}(w)- Du^{-1}(y)|\, dw\\
&\geq |M^{-1}| \big|u(Q_r(\tilde x))\big| - \delta r^2
\geq |M^{-1}| \big(|\det M| - 2 \delta |{\rm tr}\, M| \big)r^2 - \delta r^2\\
&= |M| r^2 - \delta\big(2 |M^{-1}|\, |{\rm tr}\, M| + 1\big) r^2\,.
\end{split}\]
Since, instead, for $Q_r(\tilde x)$ we have
\[
\int_{Q_r(\tilde x)} |Du(z)|\, dz \leq 
\int_{Q_r(\tilde x)} |Du(x)|\, dz + \int_{Q_r(\tilde x)} |Du(z)-Du(x)|\, dz \leq |M| r^2 + \delta r^2\,,
\]
the validity of~(\ref{flight2}) immediately follows, as soon as $\delta$ has been chosen small enough, again depending only on $\eps$ and on $M$, thus actually on $\eps$ and $x$.
\end{proof}

We can now prove the general validity of~(\ref{1.1}), which can be found already in~\cite[Theorem~1.1]{DDSS}.
\begin{theorem}\label{NRGNRG-1}
Let $u:\Omega\to\Delta$ be a planar, bi-Sobolev homeomorhism. Then,
\begin{equation}\label{1.1}
\int_\Omega |Du| = \int_\Delta |Du^{-1}|\,.
\end{equation}
\end{theorem}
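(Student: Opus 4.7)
The plan is to prove the equality by establishing the inequality $\int_\Delta |Du^{-1}|\geq \int_\Omega |Du|$ and then invoking symmetry, since $u^{-1}:\Delta\to\Omega$ is itself a bi-Sobolev homeomorphism. The key observation is that Lemma~\ref{lucio} and Lemma~\ref{lucio2} together produce exactly the same local inequality: for any $\eps>0$ and for almost every $x\in\Omega$ at which $|Du(x)|>0$, there is a radius $\bar r_3(x)>0$ such that for every $r<\bar r_3(x)$ and every $\tilde x\in Q_r(x)$,
\[
\int_{u(Q_r(\tilde x))} |Du^{-1}| \geq (1-\eps) \int_{Q_r(\tilde x)} |Du|\,.
\]
Lemma~\ref{lucio} covers the case $J_u(x)=0<|Du(x)|$, while Lemma~\ref{lucio2} covers $J_u(x)\neq 0$. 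The complementary set $\{|Du|=0\}$ contributes nothing to $\int_\Omega|Du|$, so it is harmless to ignore.

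Next I would use a Vitali covering argument. Let $E\subset\Omega$ denote the (full-measure in $\{|Du|>0\}$) set of Lebesgue points at which one of the two lemmas applies, and for each $x\in E$ shrink $\bar r_3(x)$ further, if needed, so that every square $Q_r(\tilde x)$ with $\tilde x\in Q_r(x)$ and $r<\bar r_3(x)$ is contained in $\Omega$. The family
\[
\mathcal F = \bigl\{ Q_r(\tilde x) : x\in E,\ r<\bar r_3(x),\ x\in Q_r(\tilde x)\bigr\}
\]
is a Vitali cover of $E$ by axis-parallel squares of arbitrarily small side, so the Vitali covering theorem extracts a countable subfamily $\{Q_i\}$ of pairwise disjoint (open) squares whose union covers $E$ up to a negligible set.

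Summation then closes the argument in one line. Since $u$ is a homeomorphism, the sets $\{u(Q_i)\}$ are pairwise disjoint open subsets of $\Delta$, whence
\[
\int_\Delta |Du^{-1}| \geq \sum_i \int_{u(Q_i)} |Du^{-1}| \geq (1-\eps)\sum_i \int_{Q_i} |Du| = (1-\eps) \int_\Omega |Du|\,,
\]
where the last equality uses that $|Du|=0$ almost everywhere on $\Omega\setminus \bigcup_i Q_i$ (since only a negligible portion of $E$ is uncovered). Sending $\eps\to 0$ gives $\int_\Delta |Du^{-1}|\geq \int_\Omega |Du|$, and applying the very same argument to $u^{-1}$ as a bi-Sobolev homeomorphism $\Delta\to\Omega$ furnishes the reverse inequality.

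The one step requiring care is the Vitali extraction: the radii $\bar r_3(x)$ depend on $x$ and may degenerate, so I must check that the family $\mathcal F$ genuinely is a Vitali cover of $E$ (i.e., each $x\in E$ lies in squares of arbitrarily small diameter from $\mathcal F$), which follows because the lemmas apply for every $r<\bar r_3(x)$ and the flexibility in placing $\tilde x$ guarantees that $x$ belongs to many such squares. Everything else is a single disjointness-plus-monotonicity estimate, so this covering step is really the only non-trivial point.
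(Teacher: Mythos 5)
Your argument is correct, but it takes a genuinely different covering route from the paper. You invoke the Vitali covering theorem to extract a countable disjoint family of squares (of varying, $x$-dependent sizes) covering the full-measure set $E\subseteq\{|Du|>0\}$, and then sum the local inequalities from Lemmas~\ref{lucio} and~\ref{lucio2}. The paper instead fixes a single scale $r$, works with the rigid $r$-tiling of $\Omega$, and absorbs the ``bad'' points into a set $F_r$ of measure less than $\eps$ (those $x$ with $\bar r_3(x)\leq r$, plus non-Lebesgue points, plus the thin strip outside the tiling); the inequality is then applied to every tiling square containing a ``good'' point, and the error term $\int_{F_r}|Du|$ is controlled by absolute continuity of the integral of $|Du|$. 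Both proofs rest on exactly the same local estimates; the difference is pure covering bookkeeping. Your Vitali route is slightly cleaner for this isolated statement, since it avoids tracking $F_r$ and its measure, at the cost of quoting the Vitali covering theorem (which, for squares of bounded eccentricity and a fine cover, does apply — and indeed, as you note, $x\in Q_r(x)$ for every $r<\bar r_3(x)$, so taking $\tilde x=x$ already gives the required fine cover; the extra flexibility in $\tilde x$ is not needed). The paper's tiling route is more elementary and, more importantly, is deliberately structured to match Proposition~\ref{standa} and the machinery reused in the proof of Theorem~\mref{Approxp=1}, where a fixed grid is essential. One small point to keep explicit in your version: when $\Omega$ has infinite measure you should exhaust by finite-measure subsets before applying Vitali, just as the paper flags in Section~\ref{prelim} for the tiling.
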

\begin{proof}
Let us subdivide $\Omega$ in four disjoint subsets. Namely, if $x\in \Omega$ is a Lebesgue point of $Du$, then we set $x\in A$ if $|J_u(x)|>0$ and Lemma~\ref{lucio2} holds at $x$, or $x\in B$ if $J_u(x)=0$ but $Du(x)\neq 0$, or $x\in C$ if $Du(x)=0$. Finally, $D\subseteq \Omega$ is the set of points which are not Lebesgue points for $Du$, or such that $J_u(x)\neq 0$ but Lemma~\ref{lucio2} does not hold. Keep in mind that, given $\eps>0$, for every $x\in A\cup B$ we have a constant $\bar r_3(x)$ given by Lemma~\ref{lucio2} or Lemma~\ref{lucio}. For every $r>0$, consider the squares $Q_i$ of the $r$-tiling of $\Omega$ (see Section~\ref{prelim}), and define the set $F_r$ as
\[
F_r = \Big\{ x\in A\cup B:\, \bar r_3(x) \leq r \Big\} \cup \Big(\Omega\setminus \cup\, Q_i\Big) \cup D\,.
\]
Of course, if $r$ is small enough then we can assume $|F_r |< \eps$.\par

For any square $Q_i$ of the $r$-tiling, there are then two possibilities: either $Q_i$ contains at least a point $x\in (A\cup B)\setminus F_r$, and then either Lemma~\ref{lucio} or Lemma~\ref{lucio2} imply
\begin{equation}\label{flight3}
\int_{u(Q_i)} |Du^{-1}| \geq (1-\eps) \int_{Q_i} |Du|\,,
\end{equation}
or otherwise $Q_i$ is entirely contained in $F_r\cup C$. As a consequence, if we call $U$ the union of the squares for which the inequality~(\ref{flight3}) holds true, then $\Omega\setminus U \subseteq F_r\cup C$. Since the different squares are disjoint, and the same is true for their images under the homeomorphism $u$, we can add up the inequalities~(\ref{flight3}) to get
\[\begin{split}
\int_\Delta |Du^{-1}| &\geq 
\int_{u(U)} |Du^{-1}|
\geq (1-\eps) \int_U |Du|
\geq (1-\eps) \int_\Omega |Du| - \int_{\Omega\setminus U} |Du|\\
&\geq (1-\eps) \int_\Omega |Du| - \int_{F_r\cup C} |Du|
= (1-\eps) \int_\Omega |Du| - \int_{F_r} |Du|\,,
\end{split}\]
also recalling the definition of $C$. Since the same argument can be repeated for every $\eps>0$, the measure of $F_r$ is less than $\eps$, and $Du\in L^1(\Omega)$, we derive
\[
\int_\Delta |Du^{-1}| \geq \int_\Omega |Du|\,.
\]
By the symmetry of the situation, we get also the opposite inequality, so~(\ref{1.1}) follows and the proof is concluded.
\end{proof}

As a consequence to Lemma~\ref{lucio}, we can now easily show that $Du=0$ wherever $J_u=0$ for a bi-Sobolev function. This fact is already known, for instance see~\cite[Theorem~4.5]{CHM}, but also~\cite{HK}.
\begin{theorem}[$J_u=0\Longrightarrow Du=0$]\label{sbordone}
Let $u:\Omega\to \Delta$ be a bi-Sobolev homeomorphism. Then, $Du(x)=0$ for almost all the points $x\in\Omega$ such that $J_u(x)=0$.
\end{theorem}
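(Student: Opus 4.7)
The goal is to show that the set
\[
B:=\bigl\{x\in\Omega: x \text{ is a Lebesgue point for } Du,\ J_u(x)=0,\ Du(x)\neq 0\bigr\}
\]
has Lebesgue measure zero; this suffices because non-Lebesgue points form a null set, and outside $B$ either $J_u\neq 0$ or $Du=0$ already. My plan is to argue by contradiction: assuming $|B|>0$, I will establish the stronger equality $\int_B |Du|=0$, which is impossible because $|Du|>0$ on $B$.

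Fix $\eta>0$. Since $|Du^{-1}|\in L^1(\Delta)$ and $|Du|\in L^1(\Omega)$ (for unbounded $\Omega$, apply the argument on bounded open subsets exhausting $\Omega$), absolute continuity of the integral provides $\delta>0$ such that any measurable set of Lebesgue measure less than $\delta$ contributes less than $\eta$ to either integral. Choose $\eps>0$ so small that $\eps\int_\Delta |Du^{-1}|<\delta$ and $\eps<1/2$. For every $x\in B$, Lemma~\ref{lucio} (applied with this $\eps$) furnishes a threshold $\bar r_3(x)>0$; setting $B_n:=\{x\in B:\bar r_3(x)\geq 1/n\}$, the sets $B_n$ exhaust $B$ up to a null set, so we may fix $n$ with $|B\setminus B_n|<\delta/2$. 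Finally pick $r<1/n$ small enough that $|\Omega\setminus\Omega_r|<\delta/2$, where $\Omega_r$ denotes the union of the squares of the $r$-tiling of $\Omega$.

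Let $I$ collect the indices $i$ of those squares $Q_i$ of the $r$-tiling that contain at least one point of $B_n$. For each $i\in I$, pick $x\in B_n\cap Q_i$; then the center of $Q_i$ lies in $Q_r(x)$ and $r<\bar r_3(x)$, so Lemma~\ref{lucio} gives
\[
|u(Q_i)|<\eps\int_{u(Q_i)}|Du^{-1}|\,,\qquad (1-\eps)\int_{Q_i}|Du|\leq\int_{u(Q_i)}|Du^{-1}|\,.
\]
Since $u$ is a homeomorphism and the squares $Q_i$ are pairwise disjoint, so are the images $u(Q_i)$; summing the first inequality yields
\[
\Big|\bigcup_{i\in I}u(Q_i)\Big|=\sum_{i\in I}|u(Q_i)|<\eps\sum_{i\in I}\int_{u(Q_i)}|Du^{-1}|\leq\eps\int_\Delta |Du^{-1}|<\delta\,.
\]
Absolute continuity of $|Du^{-1}|$ then forces $\int_{\cup_I u(Q_i)}|Du^{-1}|<\eta$, whence summing the second inequality gives $\int_{\cup_I Q_i}|Du|\leq(1-\eps)^{-1}\int_{\cup_I u(Q_i)}|Du^{-1}|<2\eta$.

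To conclude, observe that, up to the grid null set, $B\setminus\bigcup_I Q_i\subseteq(B\setminus B_n)\cup(\Omega\setminus\Omega_r)$, a set of measure less than $\delta$, so absolute continuity of $|Du|$ yields $\int_{B\setminus\cup_I Q_i}|Du|<\eta$. Altogether $\int_B|Du|<3\eta$, and letting $\eta\to 0$ produces the contradiction. The main technical hinge is the dual use of Lemma~\ref{lucio}: its first estimate captures the ``thin image'' phenomenon and forces $\bigl|\bigcup_I u(Q_i)\bigr|$ to be small, and absolute continuity of the $L^1$ density $|Du^{-1}|$ then converts this small-area bound, via the second estimate, into a smallness estimate for $\int_{\cup_I Q_i}|Du|$ that is completely insensitive to how large $|Du|$ could be on $B$.
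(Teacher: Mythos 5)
Your proof is correct and follows essentially the same route as the paper's: define $\Gamma = \{J_u=0,\,Du\neq 0\}$, tile $\Omega$, select squares meeting points of $\Gamma$ with $\bar r_3 > r$, apply Lemma~\ref{lucio} to each, sum the two estimates, and use absolute continuity of $|Du^{-1}|\,dy$ and $|Du|\,dx$ to force $\int_\Gamma |Du|=0$. The only difference is presentational: you make the two absolute-continuity steps explicit with $\eta,\delta$ bookkeeping, where the paper compresses them into ``sending $\eps$ to $0$''.
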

\begin{proof}
Let us call $\Gamma=\big\{x\in\Omega:\, J_u(x)=0,\, Du(x)\neq 0\big\}$, so that we need to prove that $\Gamma$ is a negligible set. For any positive quantity $\eps>0$ we can find $\bar\rho>0$ such that, for any $r<\bar\rho$, the squares $Q_r(x_i)$ of the $r$-tiling of $\Omega$ have a union $U=\cup_{i=1}^N Q_r(x_i)$ which covers the whole $\Omega$ up to a set of area less than $\eps$. Up to take $r$ small enough, anyway smaller than $\bar\rho$, we can also assume that
\[
\Big|\big\{ x \in \Gamma:\, \bar r_3(x) \leq r\big\} \Big| < \eps\,,
\]
where the quantities $\bar r_3(x)$ for points $x\in\Gamma$ are those defined in Lemma~\ref{lucio}. Among the squares $Q_r(x_i)$, let us now pick those which contain some point $x\in \Gamma$ with $\bar r_3(x)> r$, and let us call $V\subseteq U$ their union. If $x\in \Gamma\setminus V$, then either $x\notin U$, or $\bar r_3(x) \leq r$, hence by construction we have
\begin{equation}\label{VconG}
\big|  \Gamma \setminus V\big| < 2\eps\,.
\end{equation}
Let us now consider a square $Q_r(x_i)$ belonging to $V$. This means that there exists some $x\in \Gamma$ with $\bar r_3(x)>r$, and with $x\in Q_r(x_i)$, or equivalently $x_i\in Q_r(x)$. Then, we can apply Lemma~\ref{lucio} and get
\begin{align*}
\big|u(Q_r(x_i))\big| < \eps \int_{u(Q_r(x_i))} |Du^{-1}|\,, && 
\int_{u(Q_r(x_i))} |Du^{-1}| \geq (1-\eps) \int_{Q_r(x_i)} |Du|\,.
\end{align*}
Since the squares are all disjoint, and then the same holds true also for their images under the homeomorphism $u$, summing up and keeping in mind that $u$ is bi-Sobolev, hence $u^{-1}\in W^{1,1}(\Delta)$, we get
\begin{align*}
\big|u(V)\big| < \eps \| Du^{-1}\|_{L^1(\Delta)}\,, && 
\int_{u(V)} |Du^{-1}| \geq (1-\eps) \int_V |Du|\,.
\end{align*}
Recalling~(\ref{VconG}) and sending $\eps$ to $0$, the fact that $Du^{-1}\in L^1(\Delta)$ implies that $\int_\Gamma |Du|=0$, and since $|Du(x)|>0$ for any $x\in \Gamma$ this means that $|\Gamma|=0$, as desired.
\end{proof}

\begin{remark}
It is very simple to observe that the result of Theorem~\ref{sbordone} can be extended to the case of dimension $n\geq 3$. More precisely, Lemma~\ref{oldsta} immediately generalizes to any dimension; the same happens for Lemma~\ref{lucio}, but the assumptions $J_u(x)=0$ and $|Du(x)|>0$ generalize, in dimension $n$, to the assumptions $J_u(x)=0$ and $Rank(Du(x))=n-1$. As a consequence, in dimension $n\geq 3$ the generalization of Theorem~\ref{sbordone} says that for almost every $x\in\Omega$ with $J_u(x)=0$, one has $Rank(Du(x))\leq n-2$. Equivalently, we can say that for almost every $x\in\Omega$ with $J_u(x)=0$, one has $Adj(Du(x))=0$: this result was proved in~\cite[Theorem~4]{HMPS}.
\end{remark}

\section{Approximation of bi-Sobolev homeomorphisms\label{B}}

This section is devoted to show Theorem~\mref{Approxp=1}. Thanks to the general properties of bi-Sobolev homeomorphisms found in Section~\ref{A}, this will be rather simple.

\begin{proof}[Proof of Theorem~\mref{Approxp=1}]
First of all we remark that, as usual, it is enough to show the result for the case of piecewise affine approximating homeomorphisms, because then the case of the diffeomorphisms follows automatically thanks to~\cite{MP}. As a consequence, from now on we look for approximating homeomorphisms which are piecewise affine.\par

Let us fix a small constant $\delta>0$ and let $\Omega^-\subset\subset\Omega$ be a set with finite measure $|\Omega^-|=M$ such that
\begin{equation}\label{pic1}
\int_{\Omega\setminus\Omega^-} |Du| < \delta\,.
\end{equation}
This is really needed only in the case when $\Omega$ has infinite measure, but we can do this in any case, so to give a unique proof without distinguishing the two cases. Let us also fix a small constant $\eps>0$, depending on $M$.\par
For any positive $r>0$, let us then consider the $r$-tiling of $\Omega^-$, which is a subset of the $r$-tiling of $\Omega$, and which is made by finitely many squares. We subdivide then $\Omega^-$ in three parts as follows.\par
The set $\Omega_G$ is the union of all the squares of the $r$-tiling of $\Omega^-$ which contain at least a Lebesgue point $x$ for $Du$ with $J_u(x)\neq 0$, with the property that Lemma~\ref{oldsta} is valid for $x$ (and we know that this is true for almost every $x$ with $J_u(x)\neq 0$), and also that $\bar r_2(x)>r$.\par
The set $\Omega_B$ is the union of all the squares of the $r$-tiling of $\Omega^-$ which are not in $\Omega_G$, and which contain at least a Lebesgue point $x$ for $Du$ with $J_u(x)=0$, with the property that Theorem~\ref{sbordone} is valid for $x$ (and again, this is true for almost every $x$ with $J_u(x)=0$), and also that $\bar r_1(x)>r$.\par
Finally, $\Omega_N=\Omega^-\setminus (\Omega_G\cup\Omega_B)$: thus, $\Omega_N$ is the union of the squares of the $r$-tiling of $\Omega^-$ which are not in $\Omega_G\cup\Omega_B$, together with the portion of $\Omega^-$ which is not covered by the squares of the tiling.\par

Let us consider separately the three sets $\Omega_G,\, \Omega_B$ and $\Omega_N$. First of all we notice that, except for a negligible set, $\Omega_N$ is entirely done by points of $\Omega^-$ which are not in the squares of the tiling, and by Lebesgue points $x$ for $Du$ for which either $\bar r_2(x)$ or $\bar r_1(x)$ is smaller than $r$; as a consequence, as already noticed several times, the measure of $\Omega_N$ is as small as we wish up to take $r$ small enough. Therefore, we can assume that
\begin{equation}\label{pic2}
\int_{\Omega_N} |Du|\leq \eps\,.
\end{equation}\par
We now concentrate ourselves on $\Omega_B$. Let $Q$ be a square of the $r$-tiling of $\Omega'$ contained in $\Omega_B$, and let $x\in Q$ be a Lebesgue point for $Du$ as in the definition of $\Omega_B$. Thus, $J_u(x)=0$ and, thanks to Theorem~\ref{sbordone}, it is also $Du(x)=0$. Since $r<\bar r_1(x)$, the estimate~(\ref{stimagen}) of Lemma~\ref{oldsta} gives
\[
\int_Q |Du| \leq \int_Q |Du(z) - Du(x) | \, dz + \int_Q |Du(x)|\, dz = \int_Q |Du(z) - Du(x) | \, dz \leq \eps r^2 = \eps |Q|\,.
\]
Summing up on all the squares contained in $\Omega_B$, we find then
\begin{equation}\label{pic3}
\int_{\Omega_B} |Du| \leq \eps |\Omega_B| \leq \eps M\,.
\end{equation}\par
Now, we pass to consider $\Omega_G$. First of all, putting together~(\ref{pic1}), (\ref{pic2}) and~(\ref{pic3}) we already know that
\begin{equation}\label{OGgr}
\int_{\Omega\setminus \Omega_G} |Du| =
\int_{\Omega\setminus \Omega^-} |Du|+ \int_{\Omega_N} |Du|+ \int_{\Omega_B} |Du|
\leq \delta + \eps(1+M)\,.
\end{equation}
Let now $Q$ be a generic square of $\Omega_G$: Corollary~\ref{bafo} gives
\begin{equation}\label{for_G}
\int_Q |Du-Du_Q| + \int_{u_Q(Q)} |Du^{-1} - Du_Q^{-1}| < 5 \eps r^2=5\eps |Q|\,.
\end{equation}
Let us now apply Proposition~\ref{standa} with the squares of $\Omega_G$. Notice that this is possible: indeed, they are squares of the $r$-tiling of $\Omega^-$, thus also of the $r$-tiling of $\Omega$, and they are finitely many because the $r$-tiling of $\Omega^-$ is finite. Moreover, each of these squares contains a Lebesgue point $x$ for $Du$ with $J_u(x)\neq 0$ and $\bar r_2(x)>r$, then a fortiori $\bar r_1(x)> r$. Proposition~\ref{standa} provides us then with a piecewise affine homeomorphism $v$ on $\Omega$ satisfying the conditions~(i)--(v). All the requirements of Theorem~\mref{Approxp=1}, except the estimate~(\ref{estimate}), are then satisfied by $u_\eta=v$; thus, we only have to show also the validity of~(\ref{estimate}).\par

Since for every square $Q$ of $\Omega_G$ the function $v$ coincides with $u_Q$ in $Q$, adding the estimates~(\ref{for_G}) gives
\begin{equation}\label{partefacile}
\int_{\Omega_G} |Du-Dv| + \int_{v(\Omega_G)} |Du^{-1} - Dv^{-1}| < 5\eps |\Omega_G|\leq 5 \eps M\,.
\end{equation}
As a consequence, we get
\begin{equation}\label{prestuff}
\bigg|\int_{v(\Omega_G)} |Du^{-1}| - \int_{\Omega_G} |Du| \bigg|
\leq \bigg|\int_{v(\Omega_G)} |Dv^{-1}| - \int_{\Omega_G} |Dv|\bigg| + 5\eps M
=5\eps M\,.
\end{equation}
Here we have used the fact that $v$ is a finitely piecewise affine homeomorphism between $\Omega_G$ and $v(\Omega_G)$, and then we can apply Theorem~\ref{NRGNRG-1} to $v$ on $\Omega_G$. We claim that it is also
\begin{equation}\label{senzath}
\int_{\Delta\setminus v(\Omega_G)} |Dv^{-1}| = \int_{\Omega\setminus \Omega_G} |Dv|\,.
\end{equation}
Here, we cannot use Theorem~\ref{NRGNRG-1}, because still we do not know that $v$ is a bi-Sobolev homeomorphism on the whole $\Omega$. However, as observed in the introduction, the validity of~(\ref{1.1}) is trivially true for an affine map (or, more in general, for a map which satisfies both the $N$ and the $N^{-1}$ Lusin properties). Then, since $v$ is piecewise affine, we have that $\Omega\setminus \Omega_G$ is a finite or countable union of triangles, on each of which $v$ is affine: adding the equality~(\ref{1.1}) for $v$ on each of these triangles, we obtain the validity of~(\ref{senzath}). Therefore, (\ref{senzath}), property~(v) of Proposition~\ref{standa}, Theorem~\ref{NRGNRG-1}, (\ref{prestuff}) and~(\ref{OGgr}) give
\[\begin{split}
\int_{\Omega\setminus \Omega_G} |Du &- Dv| + \int_{\Delta\setminus v(\Omega_G)} |Du^{-1} - Dv^{-1}|\\
&\leq \int_{\Omega\setminus \Omega_G} |Du| +\int_{\Omega\setminus \Omega_G} |Dv|
+\int_{\Delta\setminus v(\Omega_G)} |Du^{-1}| +\int_{\Delta\setminus v(\Omega_G)} |Dv^{-1}|\\
&\leq (2K+1)\int_{\Omega\setminus \Omega_G} |Du| +\int_\Delta |Du^{-1}| - \int_{v(\Omega_G)} |Du^{-1}|\\
&= (2K+1)\int_{\Omega\setminus \Omega_G} |Du|+ \int_\Omega |Du| - \int_{v(\Omega_G)} |Du^{-1}|\\
&\leq (2K+2)\int_{\Omega\setminus \Omega_G} |Du|+ 5 \eps M
\leq (2K+2) \big(\delta + \eps(1+M)\big) + 5 \eps M\,.
\end{split}\]
Recall that $\delta$ is an arbitrary constant, $M$ depends on $\delta$, and $\eps$ can be chosen in dependence on $M$, while $K$ is a purely geometric constant. As a consequence, putting together the last estimate, (\ref{partefacile}), and condition~(i) of Proposition~\ref{standa}, we get the validity of~(\ref{estimate}) up to chose $\delta$ and $\eps$ small enough with respect to $\eta$. The proof is then concluded.
\end{proof}

\subsection*{Acknowledgment}
The author wishes to thank S. Hencl, for his precious help through the literature, and C. Sbordone, for many useful conversations about bi-Sobolev mappings. This research was partially supported by the ERC St.G. ``AnOptSetCon'' of the European Community.

\end{document}